\newtheorem{theorem}{Theorem}[subsection]
\newtheorem{lemma}[theorem]{Lemma}
\newtheorem{corollary}[theorem]{Corollary}
\newtheorem{question}[theorem]{Question}
\newtheorem{remark}[theorem]{Remark}
\DeclareMathOperator{\bbE}{\mathbb{E}}
\DeclareMathOperator{\bbN}{\mathbb{N}}
\DeclareMathOperator{\A}{\mathcal{A}}
\DeclareMathOperator{\Aut}{\operatorname{Aut}}
\DeclareMathOperator{\B}{\mathcal{B}}
\DeclareMathOperator{\C}{\mathcal{C}}
\DeclareMathOperator{\Fix}{\operatorname{Fix}}
\DeclareMathOperator{\id}{\operatorname{id}}
\DeclareMathOperator{\J}{\mathcal{J}}
\DeclareMathOperator{\LIM}{\operatorname{LIM}}
\DeclareMathOperator{\LP}{\operatorname{LP}}
\let\slasho=\o
\renewcommand{\o}{\overline}
\DeclareMathOperator{\RP}{\operatorname{RP}}
\renewcommand{\S}{\mathcal{S}}
\begin{document}

\title[Unique Expectations]{Unique Expectations for\\ Discrete Crossed Products}
\author{Vrej Zarikian}
\address{U. S. Naval Academy, Annapolis, MD 21402}
\email{zarikian@usna.edu}
\subjclass[2010]{Primary: 47L65, 46L07 Secondary: 46M10}
\keywords{Conditional expectation, pseudo-expectation, crossed product $C^*$-algebra, injective envelope, simplicity}
\begin{abstract}
Let $G$ be a discrete group acting on a unital $C^*$-algebra $\A$ by $*$-automorphisms. We characterize (in terms of the dynamics) when the inclusion $\A \subseteq \A \rtimes_r G$ has a unique conditional expectation, and when it has a unique pseudo-expectation (in the sense of Pitts). Likewise for the inclusion $\A \subseteq \A \rtimes G$. As an application, we (slightly) strengthen results of Kishimoto and Archbold-Spielberg concerning $C^*$-simplicity of $\A \rtimes_r G$.
\end{abstract}
\maketitle

\section{Introduction}

Let $\B$ be a unital $C^*$-algebra and $\A \subseteq \B$ be a unital $C^*$-subalgebra, with $1_{\A} = 1_{\B}$. In short, let $\A \subseteq \B$ be a \emph{$C^*$-inclusion}. Recently we have been concerned with characterizing when a $C^*$-inclusion admits a unique conditional expectation, or a unique pseudo-expectation (in the sense of Pitts), because significant structural consequences often ensue in both cases \cite{PittsZarikian2015,Zarikian2017}. This paper continues the program, with $\B$ equal to the crossed product of $\A$ by a discrete group $G$.\\

A \emph{conditional expectation} for a $C^*$-inclusion $\A \subseteq \B$ is a unital completely positive (ucp) map $E:\B \to \A$ such that $E|_{\A} = \id_{\A}$. Conditional expectations are automatically $\A$-bimodular, so that $E(ax) = aE(x)$ and $E(xa) = E(x)a$ whenever $x \in \B$ and $a \in \A$. Unfortunately, a $C^*$-inclusion often admits no conditional expectations at all.\\

In \cite{Pitts2017}, Pitts introduced pseudo-expectations as a substitute for possibly non-existent conditional expectations. A \emph{pseudo-expectation} for a $C^*$-inclusion $\A \subseteq \B$ is a ucp map $\theta:\B \to I(\A)$ such that $\theta|_{\A} = \id_{\A}$. Here $I(\A)$ is Hamana's \emph{injective envelope} of $\A$ (discussed in detail below). Every conditional expectation is a pseudo-expectation, but the converse is false. Just like conditional expectations, pseudo-expectations are $\A$-bimodular. Unlike conditional expectations, pseudo-expectations need not be idempotent. Indeed, if $\theta:\B \to I(\A)$ is a pseudo-expectation for $\A \subseteq \B$, then the composition $\theta \circ \theta$ is typically undefined, since it is rarely the case that $I(\A) \subseteq \B$. Furthermore, pseudo-expectations are difficult to describe explicitly, since $I(\A)$ only admits a concrete description in exceptional situations.\\

In spite of their drawbacks, pseudo-expectations enjoy two tremendous technical advantages over conditional expectations, both related to the fact that $I(\A)$ is injective. First, pseudo-expectations always exist for any $C^*$-inclusion $\A \subseteq \B$. Indeed, the identity map $\id_{\A}:\A \to \A$ always has a ucp extension $\theta:\B \to I(\A)$, by injectivity. Second, and more generally, pseudo-expectations always extend. That is, if $\theta:\B \to I(\A)$ is a pseudo-expectation for $\A \subseteq \B$, and if $\B \subseteq \C$, then there exists a pseudo-expectation $\tilde{\theta}:\C \to I(\A)$ for $\A \subseteq \C$, such that $\tilde{\theta}|_{\B} = \theta$.\\

In our experience, for the reasons detailed above, it is easier to characterize when a $C^*$-inclusion admits a unique pseudo-expectation, than to characterize when it admits a unique (or at most one) conditional expectation. Of course, if a $C^*$-inclusion admits a unique pseudo-expectation, then it admits at most one conditional expectation. So it can be profitable to consider pseudo-expectations, even if one is ultimately interested in conditional expectations. Moreover, because having a unique pseudo-expectation is a stronger condition than having at most one conditional expectation, it usually imposes tougher structural constraints on the inclusion.\\

In \cite{PittsZarikian2015}, we investigate the unique pseudo-expectation property for $C^*$-inclusions, pursuing two directions. On the one hand, we relate the unique pseudo-expectation property to other structural properties of the inclusion. For example, we show that if a $C^*$-inclusion admits a unique pseudo-expectation which is faithful, then the inclusion is hereditarily essential \cite[Thm. 3.5]{PittsZarikian2015}. (A $C^*$-inclusion $\A \subseteq \B$ is \emph{essential} if every non-trivial ideal $\J \subseteq \B$ intersects $\A$ non-trivially. It is \emph{hereditarily essential} if the $C^*$-inclusion $\A \subseteq \B_0$ is essential, for every intermediate $C^*$-algebra $\A \subseteq \B_0 \subseteq \B$.) On the other hand, we characterize when various special classes of $C^*$-inclusions admit a unique pseudo-expectation. In particular, we show that if $(\A,G,\alpha)$ is a $C^*$-dynamical system with $\A$ abelian and $G$ discrete, then the inclusion $\A \subseteq \A \rtimes_r G$ (reduced crossed product) admits a unique pseudo-expectation (necessarily a faithful conditional expectation) if and only if the induced action of $G$ on $\widehat{\A}$ is \emph{topologically free} \cite[Thm. 4.6]{PittsZarikian2015}.\\

In this paper, we substantially generalize the aforementioned \cite[Thm. 4.6]{PittsZarikian2015}. For $C^*$-dynamical systems $(\A,G,\alpha)$ with $\A$ arbitrary and $G$ discrete, we characterize (in terms of the dynamics) when $\A \subseteq \A \rtimes_r G$ admits a unique pseudo-expectation, as well as when it admits a unique conditional expectation. There is a unique pseudo-expectation if and only if the action of $G$ is \emph{properly outer} (Theorem \ref{pseudo}), and there is a unique conditional expectation if and only if $G$ \emph{acts freely} (Theorem \ref{CE}). The same statements hold for the inclusion $\A \subseteq \A \rtimes G$ (full crossed product). If the action of $G$ is properly outer, then $G$ acts freely, but the converse is false. Thus we can systematically produce $C^*$-inclusions with a unique conditional expectation, but multiple pseudo-expectations. (The first such example appears in \cite{Zarikian2017}.) Additionally, by combining Theorem \ref{pseudo} with the aforementioned \cite[Thm. 3.5]{PittsZarikian2015}, we quickly reprove (and slightly strengthen) $C^*$-simplicity results for reduced crossed products, originally due to Kishimoto \cite{Kishimoto1981} and Archbold-Spielberg \cite{ArchboldSpielberg1993}.\\

We hope that the results of this paper motivate other researchers, especially $C^*$-specialists, to take up the study of pseudo-expectations.

\section{Preliminaries}

\subsection{Discrete Crossed Products}

Let $\A$ be a unital $C^*$-algebra, $G$ be a discrete group, and $\alpha:G \to \Aut(\A)$ be a homomorphism. In short, let $(\A,G,\alpha)$ be a (discrete) $C^*$-dynamical system. We denote by $\A \rtimes_r G$ (resp. $\A \rtimes G$) the reduced (resp. full) crossed product of $\A$ by $G$ with respect to $\alpha$. That is, $\A \rtimes_r G$ is the completion of the $\alpha$-twisted convolution algebra $C_c(G,\A)$ with respect to the norm induced by the regular representation, while $\A \rtimes G$ is the completion of $C_c(G,\A)$ with respect to the norm induced by the universal representation. Evidently, there exists a unital $*$-homomorphism $\lambda:\A \rtimes G \to \A \rtimes_r G$ which fixes $C_c(G,\A)$. There is also a faithful conditional expectation $\bbE:\A \rtimes_r G \to \A$ such that
\[
    \bbE(g) = \begin{cases} 1, & g = e\\ 0, & g \neq e \end{cases}, ~ g \in G.
\]
This, in turn, gives rise to a canonical conditional expectation $\tilde{\bbE} = \bbE \circ \lambda:\A \rtimes G \to \A$.

\subsection{Hamana's Injective Envelope}

For every unital $C^*$-algebra $\A$, there exists a minimal injective operator system $I(\A)$ containing $\A$, called the \emph{injective envelope} of $\A$ \cite{Hamana1979}. That is, $I(\A)$ is an injective operator system containing $\A$ as an operator subsystem, and if $\A \subseteq \S \subseteq I(\A)$ is an injective operator system, then $\S = I(\A)$. The minimality of $I(\A)$ is equivalent to the \emph{rigidity} of the inclusion $\A \subseteq I(\A)$. That is, if $\Phi:I(\A) \to I(\A)$ is a ucp map such that $\Phi|_{\A} = \id_{\A}$, then $\Phi = \id_{I(\A)}$. Using rigidity, it is easy to see that $I(\A)$ is uniquely determined up to a complete order isomorphism which fixes $\A$.\\

A priori, $I(\A)$ is just an operator system. However, it turns out that $I(\A)$ has a wealth of algebraic and analytical structure. It is a monotonically complete $C^*$-algebra (and thus an $AW^*$-algebra) containing $\A$ as a unital $C^*$-subalgebra. As such, it enjoys many of the nice features one normally associates with von Neumann algebras. In particular:
\begin{itemize}
\item The projections in $I(\A)$ form a complete lattice.
\item For every element $x \in I(\A)$, there exists a smallest projection $\LP(x) \in I(\A)$ such that $\LP(x)x = x$. Likewise, there exists a smallest projection $\RP(x) \in I(\A)$ such that $x\RP(x) = x$.
\item For every element $x \in I(\A)$, there exists a partial isometry $v \in I(\A)$ such that $x = v|x|$, $vv^* = \LP(x)$, and $v^*v = \RP(x)$.
\end{itemize}
It is not true in general that $I(\A)$ is a dual Banach space, and so weak-$*$ convergence does not make sense in $I(\A)$. On the other hand, there is a well-behaved mode of convergence which often plays the same role. We say that $x \in I(\A)$ is the \emph{order limit} of a net $\{x_j\} \subseteq I(\A)$, and write $x = \LIM_j x_j$, provided there are increasing nets $\{a_j\}, \{b_j\}, \{c_j\}, \{d_j\} \subseteq I(\A)_{sa}$ with suprema $a, b, c, d \in I(\A)_{sa}$, respectively, such that $x_j = (a_j - b_j) + i(c_j - d_j)$ for all $j$ and $x = (a - b) + i(c - d)$. (It can be shown that this definition is independent of which increasing nets one uses.) See \cite[Ch. 2 \& 8]{SaitoWright2015} for a recent treatment of this material.

\subsection{Dynamics}

For a unital $C^*$-algebra $\A$, we denote by $\Aut(\A)$ the $*$-automorphisms of $\A$. Every $\alpha \in \Aut(\A)$ has a unique extension $\tilde{\alpha} \in \Aut(I(\A))$. Consequently, many dynamical properties of $\alpha$ can be rephrased in terms of dynamical properties of $\tilde{\alpha}$, where the situation is usually simpler. In particular:
\begin{itemize}
\item $\alpha$ is quasi-inner $\iff$ $\tilde{\alpha}$ is inner (\cite[Thm. 7.4]{Hamana1985});
\item $\alpha$ is properly outer $\iff$ $\tilde{\alpha}$ is properly outer $\iff$ $\tilde{\alpha}$ is freely acting (\cite[Prop. 5.1]{Hamana1982} and \cite[Rmk. 7.5]{Hamana1985}).
\end{itemize}
(We say that $\alpha$ is \emph{inner} if $\alpha(a) = uau^*$, $a \in \A$, where $u \in \A$ is unitary. We take the first equivalence above as the definition of \emph{quasi-innerness}, although it can also be defined using the Borchers spectrum (cf. \cite[p. 477]{Hamana1985}). We say that $\alpha$ is \emph{properly outer} if there does not exist a non-zero $\alpha$-invariant ideal $\J \subseteq \A$ such that $\alpha|_{\J}$ is quasi-inner. Equivalently, $\alpha$ is properly outer if there does not exist a non-zero $\tilde{\alpha}$-invariant projection $p \in I(\A)$ such that $\tilde{\alpha}|_{pI(\A)p}$ is inner. Finally, we say that $\alpha$ is \emph{freely acting} if it has no non-zero \emph{dependent elements}, i.e., the only $b \in \A$ such that $ba = \alpha(a)b$, $a \in \A$, is $b = 0$.)\\

For automorphisms of $I(\A)$, being properly outer and acting freely are equivalent, as indicated above. For automorphisms of $\A$, proper outerness is in general the stronger condition. Put another way, if $\tilde{\alpha}$ acts freely, then so does $\alpha$. Indeed, as implied by the following technical lemma, dependent elements for $\alpha$ are also dependent elements for $\tilde{\alpha}$. To construct a freely acting automorphism $\alpha$ such that $\tilde{\alpha}$ is inner, one can use \cite[Prop. 1]{Choda1974}.

\begin{lemma} \label{dependent}
Let $\A$ be a unital $C^*$-algebra, $\alpha \in \Aut(\A)$, and $x \in I(\A)$. If
\[
    xa = \alpha(a)x, ~ a \in \A,
\]
then
\[
    xt = \tilde{\alpha}(t)x, ~ t \in I(\A).
\]
\end{lemma}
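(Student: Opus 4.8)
The plan is to exploit the rigidity of the inclusion $\A \subseteq I(\A)$ as the main engine, applying it twice. Throughout, write $\tilde{\alpha} \in \Aut(I(\A))$ for the canonical extension. Taking adjoints in the hypothesis $xa = \alpha(a)x$ shows that $x^*\alpha(a) = ax^*$ for all $a \in \A$, whence $x^*x$ and $xx^*$ both commute with every $a \in \A$; that is, $x^*x, xx^* \in \A' \cap I(\A)$, the relative commutant. The first key step is to prove that this relative commutant is \emph{central}: $\A' \cap I(\A) \subseteq Z(I(\A))$. Since $\A' \cap I(\A)$ is a unital $C^*$-subalgebra, it is spanned by its unitaries, so it suffices to show that each unitary $w \in \A' \cap I(\A)$ is central. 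For such a $w$, the map $\Phi_w(t) = wtw^*$ is a unital $*$-automorphism of $I(\A)$ with $\Phi_w|_{\A} = \id_{\A}$ (because $w$ commutes with $\A$), so rigidity forces $\Phi_w = \id_{I(\A)}$, i.e. $w$ is central. In particular $|x| = (x^*x)^{1/2}$ lies in $Z(I(\A))$.

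Next I would pass to the polar decomposition $x = v|x|$ afforded by $I(\A)$, with $v^*v = \RP(x)$ and $vv^* = \LP(x)$. Since $|x|$ is central, so are its support projection $\RP(x)$ and $\LP(x)$ (support projections of central elements are central in an $AW^*$-algebra). The aim here is to show that the partial isometry $v$ already intertwines $\A$, i.e. $va = \alpha(a)v$ for all $a \in \A$. Starting from $v|x|a = \alpha(a)v|x|$ and using the centrality of $|x|$ to move it across $a$, one obtains $(va - \alpha(a)v)|x| = 0$. Because $I(\A)$ is an $AW^*$-algebra, $z|x| = 0$ implies $z\,\RP(x) = 0$ (the left annihilator of $|x|$ coincides with that of its support projection $\RP(x)$); applying this with $z = va - \alpha(a)v$, and then using $vp = v$ for $p = \RP(x)$ together with the centrality of $p$, yields $va = \alpha(a)v$ exactly.

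Finally, I would package this intertwining into a second application of rigidity. With $p = v^*v$ and $q = vv^*$ central, define $\Phi:I(\A) \to I(\A)$ by
\[
    \Phi(t) = v^*\tilde{\alpha}(t)v + (1-p)t(1-p).
\]
This map is completely positive (a sum of a compression of the $*$-homomorphism $\tilde{\alpha}$ and a compression of the identity) and unital (since $\Phi(1) = p + (1-p) = 1$), and a short computation using $v^*\alpha(a)v = pa$ gives $\Phi|_{\A} = \id_{\A}$. Rigidity then forces $\Phi = \id_{I(\A)}$, and multiplying this identity by the central projection $p$ collapses the second term and leaves $v^*\tilde{\alpha}(t)v = pt$ for all $t \in I(\A)$. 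Left-multiplying by $v$ and using the centrality of $q$ (with $qv = v$) produces $vt = \tilde{\alpha}(t)v$ on all of $I(\A)$. The conclusion then follows at once: for any $t \in I(\A)$,
\[
    xt = v|x|t = vt|x| = \tilde{\alpha}(t)v|x| = \tilde{\alpha}(t)x,
\]
where the second equality uses the centrality of $|x|$ and the third uses $vt = \tilde{\alpha}(t)v$. I expect the genuine obstacle to be precisely the passage from ``$v$ intertwines $\A$'' to ``$v$ intertwines $I(\A)$'': the honest content there is the correct choice of the ucp map $\Phi$ so that rigidity applies, together with the $AW^*$-support fact used to promote the relation from $x$ down to $v$ in the first place. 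The centrality of the relative commutant, by contrast, is a clean one-line consequence of rigidity.
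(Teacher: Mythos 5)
Your proposal is correct and follows essentially the same route as the paper's proof: centrality of the relative commutant $\A' \cap I(\A)$, polar decomposition $x = v|x|$ with $|x|$ central, promotion of the intertwining relation from $x$ to $v$, and a second application of rigidity to the ucp map $t \mapsto v^*\tilde{\alpha}(t)v + p^\perp t$. The only differences are cosmetic: the paper obtains $va = \alpha(a)v$ via order limits of $|x|^{1/n}$ where you use the $AW^*$ annihilator property of the support projection, and it cites \cite{ChodaKasaharaNakamoto1972} for facts you verify directly.
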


\begin{proof}
We may assume that $\|x\| \leq 1$. Arguing as in \cite{ChodaKasaharaNakamoto1972}, we see that
\[
    x^*x = xx^* \in \A' \cap I(\A) = Z(I(\A)).
\]
Thus $|x| \in Z(I(\A))$. Let $v \in I(\A)$ be a partial isometry such that $x = v|x|$, $vv^* = \LP(x)$, and $v^*v = \RP(x)$. We have that $\LIM_n |x|^{1/n} = v^*v$. For all $a \in \A$,
\begin{eqnarray*}
    v|x|a = \alpha(a)v|x|
    &\implies& v|x|^na = \alpha(a)v|x|^n, ~ n \in \bbN\\
    &\implies& v|x|^{1/n}a = \alpha(a)v|x|^{1/n}, ~ n \in \bbN\\
    &\implies& vv^*va = \alpha(a)vv^*v\\
    &\implies& va = \alpha(a)v.
\end{eqnarray*}
Thus, as before,
\[
    v^*v = vv^* \in Z(I(\A)).
\]
Set $p = v^*v$, a projection in $Z(I(\A))$, and define a ucp map $\theta:I(\A) \to I(\A)$ by the formula
\[
    \theta(t) = v^*\tilde{\alpha}(t)v + p^\perp t, ~ t \in I(\A).
\]
For all $a \in \A$, we have that
\[
    \theta(a) = v^*\alpha(a)v + p^\perp a = v^*va + p^\perp a = pa + p^\perp a = a.
\]
By rigidity, $\theta = \id_{I(\A)}$, and so
\[
    v^*vt = v^*\tilde{\alpha}(t)v, ~ t \in I(\A).
\]
Pre-multiplying by $v$ yields
\[
    vt = vv^*\tilde{\alpha}(t)v = \tilde{\alpha}(t)v, ~ t \in I(\A).
\]
It follows that
\[
    xt = \tilde{\alpha}(t)x, ~ t \in I(\A),
\]
as desired.
\end{proof}

\section{Unique Expectations}

\subsection{Unique Conditional Expectations}

In this section we show that $\A \subseteq \A \rtimes_r G$ admits a unique conditional expectation if and only if $G$ acts freely on $\A$. We begin with a lemma of independent interest, which was inspired by \cite[Prop. 3.1.4]{Stormer2013}.

\begin{lemma} \label{multdomain}
Let $\A \subseteq \B$ be a $C^*$-inclusion. Assume that there exists a unique conditional expectation $E:\B \to \A$. Then $E$ is multiplicative on $\A^c = \A' \cap \B$. If, in addition, $E$ is faithful, then $\A^c = Z(\A)$.
\end{lemma}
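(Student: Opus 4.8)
The plan is to leverage the uniqueness hypothesis by \emph{perturbing} $E$: whenever $c \in \A^c$ is positive with $E(c) = 1$, the formula $\theta(x) = E(c^{1/2}xc^{1/2})$ should define a \emph{second} conditional expectation $\B \to \A$, and uniqueness then forces $\theta = E$. Indeed, $\theta$ is ucp since $x \mapsto c^{1/2}xc^{1/2}$ is completely positive and $\theta(1) = E(c) = 1$; and since $c^{1/2} \in \A^c$ commutes with $\A$, one has $c^{1/2}ac^{1/2} = ac$, so $\theta(a) = E(ac) = aE(c) = a$ for $a \in \A$, i.e. $\theta$ fixes $\A$. Before exploiting this I would dispose of the easy preliminary that $E(\A^c) \subseteq Z(\A)$: for $c \in \A^c$ and $a \in \A$, the $\A$-bimodularity of $E$ together with $ca = ac$ gives $aE(c) = E(ac) = E(ca) = E(c)a$, so $E(c)$ commutes with $\A$ and hence lies in $Z(\A)$.

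The heart of the argument is to prove that $E(h^2) = 0$ for every self-adjoint $h \in \A^c$ with $E(h) = 0$. Scaling so that $\|h\| \leq 1$, set $c = 1 + h$, a positive element of $\A^c$ with $E(c) = 1$, and put $k = c^{1/2} \in \A^c$. By the perturbation principle above, $E(kxk) = E(x)$ for all $x \in \B$. The key step is then the substitution $x = k^2$, which collapses everything: it yields $E(k^4) = E(k^2) = E(c) = 1$. Since $h = k^2 - 1$ and all the relevant elements commute, $h^2 = k^4 - 2k^2 + 1$, so $E(h^2) = E(k^4) - 2E(k^2) + 1 = 1 - 2 + 1 = 0$, as desired. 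I expect this substitution---rather than differentiating a family $1 + sh$---to be the cleanest route, since it avoids functional-calculus derivatives entirely.

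With this in hand the lemma follows quickly. For an arbitrary self-adjoint $c \in \A^c$, apply the above to $h = c - E(c)$ (which is self-adjoint, lies in $\A^c$, and satisfies $E(h) = 0$ because $E(c) \in Z(\A)$ is fixed by $E$); expanding $E(c^2)$ using $E(c) \in Z(\A) \subseteq \A$ and bimodularity gives $E(c^2) = E(c)^2$. Hence every self-adjoint element of $\A^c$, and therefore all of $\A^c$, lies in the multiplicative domain of $E$, so $E|_{\A^c}$ is multiplicative. Finally, if $E$ is faithful, then for any $c \in \A^c$ the element $c_0 = c - E(c)$ satisfies $E(c_0^*c_0) = E(c_0)^*E(c_0) = 0$ by multiplicativity and $E(c_0) = 0$, whence $c_0 = 0$ and $c = E(c) \in Z(\A)$; combined with the trivial inclusion $Z(\A) \subseteq \A^c$ this gives $\A^c = Z(\A)$. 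The only real obstacle is locating the correct perturbation family and the substitution $x = k^2$; once these are found, every remaining verification is routine.
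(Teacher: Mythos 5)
Your proposal is correct and uses essentially the same idea as the paper: perturb $E$ to $x \mapsto E(kxk)$ with $k$ the square root of a positive element of $\A^c$, invoke uniqueness, and extract $E(x^2)=E(x)^2$ for self-adjoint $x \in \A^c$, after which the multiplicative-domain and faithfulness arguments are identical. The only (cosmetic) difference is that you center $h$ so that $E(1+h)=1$ and evaluate at $k^2$, whereas the paper uses $(1-x)^{1/2}$ with the normalizing factor $(1-E(x))^{-1}$ and evaluates at $x$ directly.
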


\begin{proof}
Since $E$ is $\A$-bimodular, $E(\A^c) = Z(\A)$. Let $x \in (\A^c)_{sa}$, with $\|x\| < 1$. Then $1-x$ is a positive invertible element of $\A^c$, and $1-E(x)$ is a positive invertible element of $Z(\A)$. Define a ucp map $\theta:\B \to \A$ by the formula
\[
    \theta(t) = E((1-x)^{1/2}t(1-x)^{1/2})(1-E(x))^{-1}, ~ t \in \B.
\]
It is easy to see that $\theta(a) = a$, $a \in \A$, so that $\theta$ is a conditional expectation. By assumption, $\theta = E$, and so
\[
    E(x)(1-E(x)) = E((1-x)^{1/2}x(1-x)^{1/2}),
\]
which implies $E(x^2) = E(x)^2$. It follows that $x$ is in the multiplicative domain of $E$. Since the choice of $x$ was arbitrary, $E|_{\A^c}:\A^c \to Z(\A)$ is a $*$-homomorphism. If $E$ is faithful, then $E|_{\A^c}$ is injective. In that case, $x = E(x) \in Z(\A)$ for all $x \in \A^c$, since $E(x-E(x)) = 0$.
\end{proof}

\begin{theorem} \label{CE}
Let $(\A,G,\alpha)$ be a discrete $C^*$-dynamical system. Then the following are equivalent:
\begin{enumerate}
\item[i.] $\A \subseteq \A \rtimes_r G$ admits a unique conditional expectation;
\item[ii.] $\A^c = Z(\A)$;
\item[iii.] $G$ acts freely on $\A$.
\end{enumerate}
\end{theorem}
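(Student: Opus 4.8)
The plan is to establish the three conditions cyclically, as (iii) $\implies$ (i) $\implies$ (ii) $\implies$ (iii). Throughout I would work with the canonical faithful conditional expectation $\bbE:\A \rtimes_r G \to \A$ and the implementing unitaries $u_g \in \A \rtimes_r G$ (the images of $g \in G$), which satisfy $u_g a u_g^* = \alpha_g(a)$, equivalently $u_g a = \alpha_g(a) u_g$, for $a \in \A$. In the uniqueness direction the goal is to show that every conditional expectation must coincide with $\bbE$.

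For (iii) $\implies$ (i), I would take an arbitrary conditional expectation $E:\A \rtimes_r G \to \A$ and exploit its $\A$-bimodularity. Computing $E(u_g a)$ in two ways gives $E(u_g)a = E(u_g a) = E(\alpha_g(a)u_g) = \alpha_g(a)E(u_g)$ for all $a \in \A$; that is, $b := E(u_g)$ satisfies $ba = \alpha_g(a)b$, so $b$ is a dependent element for $\alpha_g$. Since $G$ acts freely, $E(u_g) = 0$ for every $g \neq e$, whence $E(a u_g) = a E(u_g) = 0 = \bbE(a u_g)$. Thus $E$ and $\bbE$ agree on $C_c(G,\A)$, and by density and continuity $E = \bbE$; in particular the conditional expectation is unique. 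For (i) $\implies$ (ii), uniqueness forces the (always existing) conditional expectation to equal $\bbE$, which is faithful, so the faithful case of Lemma \ref{multdomain} yields $\A^c = Z(\A)$ at once.

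For (ii) $\implies$ (iii), I would argue contrapositively through Fourier coefficients. For $x \in \A \rtimes_r G$ set $x_g := \bbE(x u_g^*) \in \A$; since $\bbE$ is faithful, $x$ is determined by its coefficients $(x_g)_{g \in G}$. Using $a u_g^* = u_g^* \alpha_g(a)$ together with bimodularity, one computes $(xa)_g = x_g \alpha_g(a)$ and $(ax)_g = a x_g$, so $x \in \A^c$ if and only if $x_g \alpha_g(a) = a x_g$ for all $g \in G$ and $a \in \A$. The $e$-coefficient then lies in $Z(\A)$, while for $g \neq e$ the condition $x_g \alpha_g(a) = a x_g$ is, after the substitution $a \mapsto \alpha_{g^{-1}}(a)$, exactly the statement that $x_g$ is a dependent element for $\alpha_{g^{-1}}$. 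Hence if $G$ does not act freely, some $\alpha_g$ with $g \neq e$ admits a non-zero dependent element $b$ (so $ba = \alpha_g(a)b$); a direct substitution gives $b\,\alpha_{g^{-1}}(a) = a b$, so $x := b\,u_{g^{-1}}$ satisfies the relative-commutant condition, lies in $\A^c$, is non-zero, and has $\bbE(x) = 0$, forcing $x \notin Z(\A)$ and showing (ii) fails.

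I expect the bookkeeping underlying (ii) $\iff$ (iii) to be the main obstacle: one must set up the Fourier-coefficient calculus, verify the bimodularity identities $(xa)_g = x_g \alpha_g(a)$ and $(ax)_g = a x_g$ and the resulting characterization of $\A^c$, and carefully track the inversion $\alpha_g \leftrightarrow \alpha_{g^{-1}}$ that matches the relative-commutant condition to the paper's definition of a dependent element. The other two implications are comparatively routine, reducing to the single bimodularity identity $E(u_g)a = \alpha_g(a)E(u_g)$ and to Lemma \ref{multdomain}.
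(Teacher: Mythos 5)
Your proposal is correct and follows essentially the same route as the paper: (iii)$\implies$(i) via the bimodularity identity $E(u_g)a=\alpha_g(a)E(u_g)$ and freeness, (i)$\implies$(ii) via the faithful case of Lemma \ref{multdomain} applied to $\bbE$, and (ii)$\iff$(iii) via the observation that a dependent element $b$ for $\alpha_g$ produces the element $bu_{g^{-1}}\in\A^c\setminus Z(\A)$ (the paper phrases this directly rather than contrapositively, using $g^{-1}b\in\A^c$, but it is the same computation). The Fourier-coefficient bookkeeping you anticipate is not actually needed beyond this single observation.
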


\begin{proof}
(i $\implies$ ii) Lemma \ref{multdomain}.

(ii $\implies$ iii) Suppose $\A^c = Z(\A)$. Let $e \neq g \in G$ and $b \in \A$, and assume that $ba = \alpha_g(a)b$ for all $a \in \A$. Then $g^{-1}b \in \A^c$, which implies $b = 0$.

(iii $\implies$ i) Suppose $G$ acts freely on $\A$. Let $\theta:\A \rtimes_r G \to \A$ be a conditional expectation. Fix $e \neq g \in G$. For all $a \in \A$, $gag^{-1} = \alpha_g(a)$, which implies $ga = \alpha_g(a)g$, which in turn implies
\[
    \theta(g)a = \theta(ga) = \theta(\alpha_g(a)g) = \alpha_g(a)\theta(g).
\]
It follows that $\theta(g) = 0$. Since the choice of $g$ was arbitrary, $\theta = \bbE$.
\end{proof}

\begin{corollary} \label{CE_full}
Let $(\A,G,\alpha)$ be a discrete $C^*$-dynamical system. Then $\A \subseteq \A \rtimes G$ (full crossed product) admits a unique conditional expectation if and only if $G$ acts freely on $\A$.
\end{corollary}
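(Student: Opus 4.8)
The plan is to leverage Theorem \ref{CE} by transporting conditional expectations across the canonical surjection $\lambda:\A \rtimes G \to \A \rtimes_r G$, which is a unital $*$-homomorphism fixing $C_c(G,\A)$, and in particular fixing $\A$. The guiding observation is that the relevant algebraic relations all live in the common dense subalgebra $C_c(G,\A)$, so freeness is ``seen'' identically in both crossed products.

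For the forward implication (free action implies a unique conditional expectation), I would simply rerun the argument of (iii $\implies$ i) in Theorem \ref{CE} verbatim, now inside the full crossed product. Since the covariance relation $ga = \alpha_g(a)g$ already holds in $C_c(G,\A)$, hence in $\A \rtimes G$, any conditional expectation $\theta:\A \rtimes G \to \A$ satisfies $\theta(g)a = \alpha_g(a)\theta(g)$ for every $a \in \A$ and every $e \neq g \in G$. This exhibits $\theta(g)$ as a dependent element for $\alpha_g$, so freeness forces $\theta(g) = 0$. Thus $\theta$ agrees with $\tilde{\bbE}$ on $C_c(G,\A)$, and therefore everywhere by continuity.

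For the converse I would argue by contrapositive: assuming $G$ does not act freely, I produce two distinct conditional expectations on $\A \rtimes G$. By Theorem \ref{CE} (the contrapositive of i $\implies$ iii), the reduced crossed product admits a conditional expectation $E:\A \rtimes_r G \to \A$ with $E \neq \bbE$. Pulling back along $\lambda$ yields a conditional expectation $E \circ \lambda:\A \rtimes G \to \A$, which is ucp and restricts to $\id_{\A}$ on $\A$ because $\lambda$ fixes $\A$. It then remains to check that $E \circ \lambda \neq \bbE \circ \lambda = \tilde{\bbE}$, giving the desired second conditional expectation.

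I do not expect a serious obstacle here, as the construction is essentially formal. The one point demanding care is confirming that $E \circ \lambda$ and $\tilde{\bbE}$ are genuinely distinct rather than accidentally equal. This is precisely where surjectivity of $\lambda$ enters: since $\lambda(C_c(G,\A))$ is dense and the image of a $*$-homomorphism between $C^*$-algebras is closed, $\lambda$ is onto, so $E \circ \lambda = \bbE \circ \lambda$ would force $E = \bbE$, a contradiction. Everything else is a direct transcription of the reduced-crossed-product arguments.
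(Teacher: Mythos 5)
Your proposal is correct and takes essentially the same route as the paper: the freeness direction reruns the (iii $\implies$ i) argument of Theorem \ref{CE} inside $\A \rtimes G$, and the other direction transports conditional expectations across $\lambda:\A \rtimes G \to \A \rtimes_r G$ and invokes Theorem \ref{CE} together with surjectivity of $\lambda$. The only cosmetic difference is that you argue that direction contrapositively (producing a second expectation $E \circ \lambda$ from a non-canonical $E$ on the reduced crossed product), whereas the paper argues it directly (showing uniqueness upstairs forces uniqueness downstairs); the ingredients are identical.
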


\begin{proof}
($\Rightarrow$) Let $\theta:\A \rtimes_r G \to \A$ be a conditional expectation. Then $\theta \circ \lambda:\A \rtimes G \to \A$ is a conditional expectation, so that $\theta \circ \lambda = \bbE \circ \lambda$, by uniqueness. Thus $\theta = \bbE$. By Theorem \ref{CE}, $G$ acts freely on $\A$.

($\Leftarrow$) Conversely, suppose $G$ acts freely on $\A$. Let $\Theta:\A \rtimes G \to \A$ be a conditional expectation. Then repeating the proof of (iii $\implies$ i) in Theorem \ref{CE} above, with $\theta$ replaced by $\Theta$, we see that $\Theta(g) = 0$ for all $g \neq e$. Hence $\Theta = \tilde{\bbE}$.
\end{proof}

\subsection{Unique Pseudo-Expectations}

In this section we show that $\A \subseteq \A \rtimes_r G$ (resp. $\A \subseteq \A \rtimes G$) admits a unique pseudo-expectation if and only if the action of $G$ on $\A$ is properly outer. We begin with a technical lemma, similar in spirit to \cite[Lemma 5.1.6]{EffrosRuan2000}.

\begin{lemma} \label{factorization}
Let $\A \subseteq \B$ be a $C^*$-inclusion and $\theta:\B \to I(\A)$ be a completely positive $\A$-bimodule map. Then there exists a ucp $\A$-bimodule map $\tilde{\theta}:\B \to I(\A)$ (i.e., a pseudo-expectation for $\A \subseteq \B$) such that $\theta(x) = \theta(1)\tilde{\theta}(x)$, $x \in \B$.
\end{lemma}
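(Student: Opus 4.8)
The plan is to construct $\tilde{\theta}$ by ``dividing'' $\theta$ by the positive element $c := \theta(1) \in I(\A)$. First I would observe that $c$ is central: since $\theta$ is an $\A$-bimodule map, $ac = a\theta(1) = \theta(a) = \theta(1)a = ca$ for every $a \in \A$, so $c \in \A' \cap I(\A) = Z(I(\A))$ (the equality of relative commutant and center being the fact already invoked in the proof of Lemma \ref{dependent}); moreover $c \ge 0$ since $\theta$ is positive, and after rescaling we may assume $\|c\| \le 1$. Let $s \in Z(I(\A))$ be the support projection of $c$, i.e. the smallest (central) projection with $sc = c$, which exists because $I(\A)$ is monotone complete, and write $s^\perp := 1 - s$.

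Next I would show that $\theta$ takes values in the corner $sI(\A)$, that is $s\theta(x) = \theta(x) = \theta(x)s$ for all $x \in \B$. Complete positivity gives the Schwarz positivity
\[
    \begin{pmatrix} c & \theta(x) \\ \theta(x)^* & \theta(x^*x) \end{pmatrix} = \theta^{(2)}\begin{pmatrix} 1 & x \\ x^* & x^*x \end{pmatrix} \ge 0.
\]
Compressing by $\mathrm{diag}(s^\perp, 1)$ and using $s^\perp c = 0$ forces the off-diagonal entry $s^\perp \theta(x)$ to vanish, so $s\theta(x) = \theta(x)$; centrality of $s$ yields the identity on the right as well.

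The heart of the argument is the normalization. For $\epsilon > 0$ the central element $c + \epsilon$ is positive and invertible, so $\psi_\epsilon(x) := (c+\epsilon)^{-1}\theta(x)$ defines a completely positive $\A$-bimodule map (namely $\theta$ followed by conjugation by the central element $(c+\epsilon)^{-1/2}$, centrality giving bimodularity). For $x \ge 0$ one checks, using centrality, that $\{\psi_\epsilon(x)\}$ increases as $\epsilon \downarrow 0$ and is bounded above by $\|x\|\cdot 1$, so the order limit $\tilde{\theta}_0(x) := \LIM_{\epsilon \downarrow 0}\psi_\epsilon(x)$ exists in $I(\A)$ by monotone completeness; it extends by linearity to $\B$. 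Taking order limits of the positive matrices $\psi_\epsilon^{(n)}([x_{ij}])$ shows $\tilde{\theta}_0$ is completely positive, and bimodularity passes to the limit. A short computation gives $\tilde{\theta}_0(1) = \LIM_\epsilon c(c+\epsilon)^{-1} = s$, and, since $c(c+\epsilon)^{-1}\theta(x) = \theta(x) - \epsilon(c+\epsilon)^{-1}\theta(x) \to \theta(x)$ in norm, the factorization $c\,\tilde{\theta}_0(x) = \theta(x)$. To repair unitality (we only have $\tilde{\theta}_0(1) = s$), I would fix any pseudo-expectation $\mu:\B \to I(\A)$, which exists by injectivity of $I(\A)$, and set
\[
    \tilde{\theta}(x) := \tilde{\theta}_0(x) + s^\perp \mu(x).
\]
Then $\tilde{\theta}$ is completely positive and $\A$-bimodular with $\tilde{\theta}(1) = s + s^\perp = 1$ and $\tilde{\theta}(a) = sa + s^\perp a = a$ for $a \in \A$, hence a pseudo-expectation; and since $cs^\perp = 0$ we obtain $\theta(1)\tilde{\theta}(x) = c\,\tilde{\theta}_0(x) = \theta(x)$, as required.

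I expect the main obstacle to be making sense of ``$c^{-1}\theta$'' when $c = \theta(1)$ fails to be invertible, since the naive inverse is unbounded on the support of $c$. This is precisely what the order-limit construction circumvents, trading a bounded inverse for the monotone completeness of $I(\A)$; the one point that genuinely needs care is the order-continuity of multiplication by the fixed central element $c$, used to pass $c$ through the limit in $c\,\tilde{\theta}_0 = \theta$, which I would justify via the order-limit calculus developed in \cite{SaitoWright2015}. The patching term $s^\perp\mu$ is a cheap device that upgrades the subunital map $\tilde{\theta}_0$ to a genuinely unital one without disturbing the factorization.
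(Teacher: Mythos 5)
Your proposal is correct and follows essentially the same route as the paper: regularize the non-invertible central element $\theta(1)$ by $(\theta(1)+\epsilon)^{-1}$, take order limits using monotone completeness, and patch with an arbitrary pseudo-expectation on the complementary corner $p^\perp$. The only cosmetic difference is that you establish $s\theta(x)=\theta(x)$ via a $2\times 2$ Schwarz-matrix compression, whereas the paper deduces $p^\perp\theta(x)=0$ directly from the order inequality $-\|x\|\theta(1)\leq\theta(x)\leq\|x\|\theta(1)$ for $x\in\B_{sa}$; both are valid.
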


\begin{proof}
Fix any ucp $\A$-bimodule map $\Phi:\B \to I(\A)$ (i.e., any ucp extension of $\id_{\A}$). Since
\[
    a\theta(1) = \theta(a) = \theta(1)a, ~ a \in \A,
\]
we see that $\theta(1) \in \A' \cap I(\A) = Z(I(\A))$. We claim that $\LIM_n (\theta(1) + 1/n)^{-1}\theta(x)$ exists for all $x \in \B$. Indeed, for all $x \in \B_+$, $\{(\theta(1) + 1/n)^{-1}\theta(x)\} \subseteq I(\A)_+$ is an increasing sequence bounded above by $\|x\|$. In particular, $\LIM_n (\theta(1) + 1/n)^{-1}\theta(1) = p$, where $p = \LP(\theta(1)) = \RP(\theta(1)) \in Z(I(\A))$. Now define a ucp map $\tilde{\theta}:\B \to I(\A)$ by the formula
\[
    \tilde{\theta}(x) = \LIM_n (\theta(1) + 1/n)^{-1}\theta(x) + p^\perp \Phi(x), ~ x \in \B.
\]
Since $\theta$ and $\Phi$ are $\A$-bimodular, so is $\tilde{\theta}$. Furthermore,
\[
    \theta(1)\tilde{\theta}(x) = p\theta(x), ~ x \in \B.
\]
But $p\theta(x) = \theta(x)$, $x \in \B$, since for all $x \in \B_{sa}$,
\[
    -\|x\| \leq x \leq \|x\| \implies -\|x\|\theta(1) \leq \theta(x) \leq \|x\|\theta(1)
    \implies p^\perp\theta(x) = 0.
\]
Thus
\[
    \theta(1)\tilde{\theta}(x) = \theta(x), ~ x \in \B.
\]
\end{proof}

\begin{theorem} \label{pseudo}
Let $(\A,G,\alpha)$ be a discrete $C^*$-dynamical system. Then $\A \subseteq \A \rtimes_r G$ admits a unique pseudo-expectation if and only if the action of $G$ on $\A$ is properly outer.
\end{theorem}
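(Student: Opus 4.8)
The plan is to reduce everything to the single relation that a pseudo-expectation $\theta:\A\rtimes_r G \to I(\A)$ must satisfy on the canonical unitaries $g \in G$. Writing $b_g = \theta(g)$ and combining $\A$-bimodularity with $ga = \alpha_g(a)g$ yields $b_g a = \alpha_g(a)b_g$ for all $a \in \A$; that is, each $b_g$ is a dependent element for $\alpha_g$. By Lemma \ref{dependent} this upgrades to $b_g t = \tilde\alpha_g(t)b_g$ for all $t \in I(\A)$, so $b_g$ is a dependent element for $\tilde\alpha_g$ in $I(\A)$. This observation drives both directions. For the easy implication, suppose the action is properly outer, so that for every $g \neq e$ the automorphism $\tilde\alpha_g$ is freely acting and hence admits no non-zero dependent element. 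Then $b_g = 0$ for $g \neq e$, while $b_e = \theta(1) = 1$. Since $\theta$ and $\bbE$ are both contractive and agree on the dense $*$-subalgebra $C_c(G,\A)$, they coincide, so $\bbE$ is the unique pseudo-expectation.

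For the converse I would argue contrapositively. If the action is not properly outer, pick $g \neq e$ for which $\tilde\alpha_g$ is not freely acting, together with a non-zero dependent element $x \in I(\A)$. As in the proof of Lemma \ref{dependent}, polar-decompose $x = v|x|$ to obtain a partial isometry $v \in I(\A)$ with $p := v^*v = vv^* \in Z(I(\A))$ a non-zero projection satisfying $\tilde\alpha_g(p) = p$ and $vt = \tilde\alpha_g(t)v$ for all $t$; equivalently, $\tilde\alpha_g$ is inner on the corner $pI(\A)p$, implemented by $v$. The goal is then to build a pseudo-expectation $\theta$ with $\theta(g) = v \neq 0 = \bbE(g)$, witnessing non-uniqueness.

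To do this cleanly I would first pass to the cyclic subgroup $H = \langle g\rangle$ and exploit the extension property of pseudo-expectations recorded in the introduction: it suffices to produce a pseudo-expectation $\theta_H:\A\rtimes_r H \to I(\A)$ with $\theta_H(g) = v$, since any extension $\theta$ of $\theta_H$ to $\A\rtimes_r G$ is again a pseudo-expectation for $\A \subseteq \A\rtimes_r G$ and still satisfies $\theta(g) = v \neq 0$. On $\A\rtimes_r H$ the construction is the expected one: the powers $v^k$ furnish a unitary representation of $H$ on the corner $pI(\A)p$ implementing $\tilde\alpha_g^k$, which ``untwists'' the crossed product there (concretely $w := v^*g$ commutes with $pI(\A)p$ inside $pI(\A)\rtimes_r H$, and evaluating the resulting copy of $C^*_r(H)$ at the trivial character sends $g \mapsto v$). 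Splitting off the complementary corner with the canonical expectation exactly as in Lemmas \ref{dependent} and \ref{factorization}, one arrives at the formula $\theta_H(y) = \LIM_n \big(\sum_{|k|\le n} \bbE(y g^{-k})\,v^k\big) + p^\perp\bbE(y)$.

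The main obstacle is verifying that this $\theta_H$ is genuinely completely positive and that the order limit exists. Complete positivity should follow from positive-definiteness of the $I(\A)$-valued function $k \mapsto v^k$, i.e.\ from the fact that $k \mapsto v^k$ is a unitary representation of $H$ on the corner, so that the defining formula is, on the corner, an evaluation of $C^*_r(H)$ at a character composed with the embedding $\A\rtimes_r H \hookrightarrow I(\A)\rtimes_r H$ --- both manifestly completely positive. The remaining work is bookkeeping: checking that the embedding of reduced crossed products is compatible with the canonical expectations, and handling both the corner $p \neq 1$ and the distinction between $g$ of infinite order ($H \cong \bbZ$) and finite order ($H \cong \bbZ/n$, where $v^n$ is a central unitary that may first need adjusting so that $k \mapsto v^k$ descends to $H$). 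Once $\theta_H \neq \bbE|_{\A\rtimes_r H}$ is in hand, the extension property completes the argument.
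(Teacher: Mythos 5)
Your forward direction (properly outer $\Rightarrow$ uniqueness) is exactly the paper's argument: bimodularity gives $\theta(g)a=\alpha_g(a)\theta(g)$, Lemma \ref{dependent} upgrades this to a dependent element for $\tilde\alpha_g$, and free action of $\tilde\alpha_g$ forces $\theta(g)=0$. Your converse, however, is a genuinely different route. The paper never constructs the offending pseudo-expectation directly on the group elements: it writes down the completely \emph{bounded} $\A$-bimodule map $x\mapsto \bbE(x)+\bbE(xg^{-1})v$ on all of $\A\rtimes_r G$ (no passage to a subgroup), splits it by Wittstock's decomposition theorem into completely positive $\A$-bimodule maps, observes one of them is non-zero at $g$, and then normalizes that one into a pseudo-expectation via Lemma \ref{factorization}. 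This sidesteps everything your construction has to confront: amenability of $H=\langle g\rangle$ (needed so the trivial character lives on $C^*_r(H)$), the finite-order adjustment of $v$, the compatibility of reduced crossed products with subgroups and with $\A\subseteq I(\A)$, and the extension step. What your approach buys in exchange is an explicit witness: you actually see a pseudo-expectation sending $g$ to $v$, whereas the paper's $\tilde\theta_1$ is only known to satisfy $\tilde\theta_1(g)\neq 0$.

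One caveat on your construction: the displayed formula $\theta_H(y)=\LIM_n\bigl(\sum_{|k|\le n}\bbE(yg^{-k})v^k\bigr)+p^\perp\bbE(y)$ cannot be taken as the definition. The partial sums of the Fourier series of a reduced-crossed-product element need not converge in norm, and they are not monotone in the sense required for the order limit $\LIM$ as defined in the paper (which applies to nets assembled from increasing self-adjoint nets), so the existence of that limit is exactly what is in doubt. The correct definition is the one you give as the "justification": the composite of compression to the central corner $p$, the untwisting isomorphism $p(I(\A)\rtimes_r H)p\cong pI(\A)p\otimes C^*_r(H)$, and the slice by the trivial character, added to $p^\perp\bbE(\cdot)$. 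Phrased that way, complete positivity and the value $\theta_H(g)=v$ are immediate and no order limits are needed; with that substitution, and the extension property to pass from $H$ to $G$, your argument is sound.
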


\begin{proof}
($\Rightarrow$) Suppose that $\alpha_g \in \Aut(\A)$ is not properly outer for some $g \neq e$. Then $\tilde{\alpha}_g \in \Aut(I(\A))$ is not properly outer (hence not freely acting), and so there exists $0 \neq v \in I(\A)$ such that $vx = \tilde{\alpha}_g(x)v$, $x \in I(\A)$. In particular, $va = \alpha_g(a)v$, $a \in \A$. Define a completely bounded map $\theta:\A \rtimes_r G \to I(\A)$ by the formula
\[
    \theta(x) = \bbE(x) + \bbE(xg^{-1})v, ~ x \in \A \rtimes_r G,
\]
where $\bbE:\A \rtimes_r G \to \A$ is the canonical conditional expectation. Note that $\theta(g) = v \neq 0$. Obviously $\theta$ is a left $\A$-bimodule map, since $\bbE$ is. It is also a right $\A$-bimodule map, since for all $x \in \A \rtimes_r G$ and all $a \in \A$, we have that
\begin{eqnarray*}
    \theta(xa)
    &=& \bbE(xa) + \bbE(xag^{-1})v\\
    &=& \bbE(xa) + \bbE(xg^{-1}gag^{-1})v\\
    &=& \bbE(xa) + \bbE(xg^{-1}\alpha_g(a))v\\
    &=& \bbE(x)a + \bbE(xg^{-1})\alpha_g(a)v\\
    &=& \bbE(x)a + \bbE(xg^{-1})va\\
    &=& (\bbE(x) + \bbE(xg^{-1})v)a\\
    &=& \theta(x)a.
\end{eqnarray*}
By \cite[Thm. 4.5]{Wittstock1981}, $\theta = (\theta_1 - \theta_2) + i(\theta_3 - \theta_4)$, where $\theta_j:\A \rtimes_r G \to I(\A)$ is a completely positive $\A$-bimodule map, $1 \leq j \leq 4$. Without loss of generality, $\theta_1(g) \neq 0$. By Lemma \ref{factorization}, there exists a pseudo-expectation $\tilde{\theta}_1:\A \rtimes_r G \to I(\A)$ for $\A \subseteq \A \rtimes_r G$ such that $\theta_1(x) = \theta_1(1)\tilde{\theta}_1(x)$, $x \in \A \rtimes_r G$. In particular, $\tilde{\theta}_1(g) \neq 0$, so that $\tilde{\theta}_1 \neq \bbE$.

($\Leftarrow$) Conversely, suppose that $\alpha_g \in \Aut(\A)$ is properly outer for all $g \neq e$. Then $\tilde{\alpha}_g \in \Aut(I(\A))$ is properly outer (hence freely acting) for all $g \neq e$. Let $\theta:\A \rtimes_r G \to I(\A)$ be a pseudo-expectation for $\A \subseteq \A \rtimes_r G$. For $g \in G$, we have that
\[
    gag^{-1} = \alpha_g(a) \implies ga = \alpha_g(a)g \implies \theta(g)a = \alpha_g(a)\theta(g), ~ a \in \A.
\]
By Lemma \ref{dependent}, we have that
\[
    \theta(g)t = \tilde{\alpha}_g(t)\theta(g), ~ t \in I(\A).
\]
Thus $\theta(g) = 0$ for all $g \neq e$. Hence $\theta = \bbE$.
\end{proof}

As pointed out to us by David Pitts, the proof of Theorem \ref{pseudo} can be repeated verbatim with $\A \rtimes_r G$ replaced by $\A \rtimes G$ and $\bbE:\A \rtimes_r G \to \A$ replaced by $\tilde{\bbE} = \bbE \circ \lambda:\A \rtimes G \to \A$. Thus we have:

\begin{corollary} \label{pseudo_full}
Let $(\A,G,\alpha)$ be a discrete $C^*$-dynamical system. Then $\A \subseteq \A \rtimes G$ (full crossed product) admits a unique pseudo-expectation if and only if the action of $G$ on $\A$ is properly outer.
\end{corollary}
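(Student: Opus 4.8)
The plan is to transport the proof of Theorem \ref{pseudo} to the full crossed product, substituting $\A \rtimes_r G$ by $\A \rtimes G$ and the canonical expectation $\bbE$ by $\tilde{\bbE} = \bbE \circ \lambda$. The preliminary observation is that $\tilde{\bbE}$ inherits every structural feature of $\bbE$ that the original argument exploits: it is a ucp $\A$-bimodule map, being the composite of the ucp $\A$-bimodular map $\bbE$ with the unital $*$-homomorphism $\lambda$, which fixes $C_c(G,\A)$ and hence $\A$; it satisfies $\tilde{\bbE}(g) = 1$ when $g = e$ and $\tilde{\bbE}(g) = 0$ otherwise; and the covariance relation $gag^{-1} = \alpha_g(a)$ already holds in $C_c(G,\A) \subseteq \A \rtimes G$. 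With these facts in hand, the two implications mirror their counterparts in Theorem \ref{pseudo}.

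For the forward direction, I would assume $\alpha_g$ fails to be properly outer for some $g \neq e$, produce $0 \neq v \in I(\A)$ with $va = \alpha_g(a)v$ for all $a \in \A$ (using that proper outerness and free action coincide for $\tilde{\alpha}_g$), and define $\theta:\A \rtimes G \to I(\A)$ by $\theta(x) = \tilde{\bbE}(x) + \tilde{\bbE}(xg^{-1})v$. Since $g^{-1}$ is a unitary in $\A \rtimes G$ and $\tilde{\bbE}$ is bounded, $\theta$ is a well-defined completely bounded map with $\theta(g) = v \neq 0$. The verification that $\theta$ is a right $\A$-bimodule map is the identical chain of equalities from the proof of Theorem \ref{pseudo}, valid because $\tilde{\bbE}$ is $\A$-bimodular and $\alpha_g(a)v = va$. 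Decomposing $\theta$ into completely positive $\A$-bimodule maps exactly as in that proof, and applying Lemma \ref{factorization} to a summand $\theta_1$ with $\theta_1(g) \neq 0$, then yields a pseudo-expectation $\tilde{\theta}_1$ with $\tilde{\theta}_1(g) \neq 0$, so that $\tilde{\theta}_1 \neq \tilde{\bbE}$.

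For the reverse direction, I would take $\alpha_g$ properly outer for every $g \neq e$, so that each $\tilde{\alpha}_g$ acts freely, and let $\theta:\A \rtimes G \to I(\A)$ be any pseudo-expectation. The covariance relation gives $\theta(g)a = \alpha_g(a)\theta(g)$ for all $a \in \A$; Lemma \ref{dependent} upgrades this to $\theta(g)t = \tilde{\alpha}_g(t)\theta(g)$ for all $t \in I(\A)$; and free action of $\tilde{\alpha}_g$ forces $\theta(g) = 0$ for each $g \neq e$. By $\A$-bimodularity, $\theta$ and $\tilde{\bbE}$ agree on $C_c(G,\A)$, and by norm-continuity of ucp maps they agree on all of $\A \rtimes G$, whence $\theta = \tilde{\bbE}$. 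I do not anticipate a genuine obstacle: the only point demanding care is confirming that passing from $\bbE$ to $\tilde{\bbE}$ preserves bimodularity, complete positivity, and the values on group elements, together with the density of $C_c(G,\A)$ in $\A \rtimes G$ — all immediate from the properties of $\lambda$. In particular the possible non-injectivity of $\lambda$ plays no role, since every step is phrased directly in terms of $\tilde{\bbE}$ and the group elements of $\A \rtimes G$.
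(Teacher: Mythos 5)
Your proposal is correct and is exactly the paper's argument: the paper proves Corollary \ref{pseudo_full} by noting that the proof of Theorem \ref{pseudo} can be repeated verbatim with $\A \rtimes_r G$ replaced by $\A \rtimes G$ and $\bbE$ replaced by $\tilde{\bbE} = \bbE \circ \lambda$. Your write-up simply makes explicit the routine checks (that $\tilde{\bbE}$ is a ucp $\A$-bimodule map with the same values on group elements) that the paper leaves implicit.
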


\section{Applications}

\subsection{Special Inclusions}

In this section we specialize Theorems \ref{CE} and \ref{pseudo} and their corollaries to particular cases, namely $\A$ abelian and $\A$ simple. We begin with the case $\A$ abelian.

\begin{remark} \label{abelian}
If $\A$ is unital abelian $C^*$-algebra, then every $\alpha \in \Aut(\A)$ induces a homeomorphism $\hat{\alpha}:\widehat{\A} \to \widehat{\A}$ by the formula $\hat{\alpha}(\sigma) = \sigma \circ \alpha$, $\sigma \in \widehat{\A}$. In that case, the following are equivalent:
\begin{enumerate}
\item[i.] $\alpha$ is properly outer;
\item[ii.] $\alpha$ is freely acting;
\item[iii.] $\hat{\alpha}$ is topologically free, i.e., $\Fix(\hat{\alpha})^\circ = \emptyset$.
\end{enumerate}
\end{remark}

\begin{proof}
(i $\implies$ ii) True in general, not just the abelian case.

(ii $\implies$ i) Suppose $\alpha$ is freely acting. Let $\J \subseteq \A$ be an $\alpha$-invariant ideal such that $\alpha|_{\J}$ is quasi-inner. Then $\widetilde{\alpha|_{\J}} = \tilde{\alpha}|_{I(\J)}$ is inner, therefore the identity map. Hence $\alpha|_{\J}$ is the identity map. Now let $h \in \J$. For all $a \in \A$, we have that
\[
    ha = \alpha(ha) = \alpha(h)\alpha(a) = h\alpha(a) = \alpha(a)h.
\]
Thus $h = 0$. Since the choice of $h$ was arbitrary, $\J = 0$ and $\alpha$ is properly outer.

(ii $\iff$ iii) \cite[Thm. 1]{EnomotoTamaki1974}.
\end{proof}

\begin{corollary}
Let $(\A,G,\alpha)$ be a discrete $C^*$-dynamical system, with $\A$ abelian. Then the following are equivalent:
\begin{enumerate}
\item[i.] $\A \subseteq \A \rtimes_r G$ (or $\A \subseteq \A \rtimes G$) admits a unique pseudo-expectation;
\item[ii.] $\A \subseteq \A \rtimes_r G$ (or $\A \subseteq \A \rtimes G$) admits a unique conditional expectation;
\item[iii.] $\A \subseteq \A \rtimes_r G$ is a MASA;
\item[iv.] $G$ acts topologically freely on $\widehat{\A}$.
\end{enumerate}
\end{corollary}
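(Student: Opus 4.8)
The plan is to reduce each of conditions (i), (ii), and (iv) to a \emph{pointwise} statement about the individual automorphisms $\alpha_g$, $g \neq e$, identify those three statements via Remark \ref{abelian}, and then dispatch (iii) by recognizing it as a special case of condition (ii) of Theorem \ref{CE}.

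First I would translate the three ``outer'' conditions into pointwise dynamical data. By Theorem \ref{pseudo} (for the reduced crossed product) and Corollary \ref{pseudo_full} (for the full crossed product), condition (i) holds if and only if the action of $G$ on $\A$ is properly outer, i.e., $\alpha_g$ is properly outer for every $g \neq e$. By Theorem \ref{CE} and Corollary \ref{CE_full}, condition (ii) holds if and only if $G$ acts freely on $\A$, i.e., $\alpha_g$ is freely acting for every $g \neq e$. Finally, condition (iv)---that $G$ acts topologically freely on $\widehat{\A}$---unwinds by definition to $\Fix(\hat{\alpha}_g)^\circ = \emptyset$ for every $g \neq e$, which is precisely topological freeness of each $\hat{\alpha}_g$ with $g$ nontrivial.

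The key step is then to apply Remark \ref{abelian}, which asserts that for a single automorphism of an abelian $\A$, proper outerness, free action, and topological freeness of the induced homeomorphism are equivalent. Applying it to each $\alpha_g$ with $g \neq e$ collapses the three pointwise conditions into one, giving (i) $\iff$ (ii) $\iff$ (iv).

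It remains to fold in (iii). Here I would note that, $\A$ being abelian, $\A \subseteq \A^c = \A' \cap (\A \rtimes_r G)$, and that $\A$ is a MASA exactly when this inclusion is an equality: any self-adjoint $x \in \A^c$ generates together with $\A$ a commutative $C^*$-subalgebra, so maximality forces $x \in \A$, whence $\A^c = \A$. Since $Z(\A) = \A$ for abelian $\A$, the MASA condition $\A^c = \A$ is identical to condition (ii) of Theorem \ref{CE}, so (ii) $\iff$ (iii). Combining everything yields the four-way equivalence. I expect no genuine obstacle; the only care needed is the bookkeeping of definitions, in particular seeing that the relative-commutant equality $\A^c = Z(\A)$ appearing in Theorem \ref{CE} is the MASA condition once $\A$ is abelian.
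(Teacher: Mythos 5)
Your proof is correct and is essentially the derivation the paper intends (the corollary is stated without proof, immediately after Remark \ref{abelian}): combine Theorems \ref{CE} and \ref{pseudo} with Corollaries \ref{CE_full} and \ref{pseudo_full} to reduce (i), (ii) to pointwise proper outerness and free action, apply Remark \ref{abelian} to get the equivalence with (iv), and identify the MASA condition with $\A^c = Z(\A)$ from Theorem \ref{CE} to handle (iii). No gaps.
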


(In particular, we recover \cite[Thm. 4.6]{PittsZarikian2015}.)\\

Now we consider the case $\A$ simple.

\begin{remark} \label{simple}
If $\A$ is simple unital $C^*$-algebra and $\alpha \in \Aut(\A)$, then the following are equivalent:
\begin{enumerate}
\item[i.] $\alpha$ is properly outer;
\item[ii.] $\alpha$ is freely acting;
\item[iii.] $\alpha$ is outer (i.e., not inner).
\end{enumerate}
\end{remark}

\begin{proof}
(i $\implies$ ii) True in general, not just the simple case.

(ii $\implies$ iii) True in general, not just the simple case. (Consider the contrapositive.)

(iii $\implies$ i) Suppose $\alpha$ is outer. By \cite[Thm. 3.6]{SaitoWright1984}, $\tilde{\alpha}$ is outer. Now $I(\A)$ is simple, and therefore a factor \cite[Prop. 4.15]{Hamana1979}. Thus $\tilde{\alpha}$ is properly outer, so that $\alpha$ is properly outer.
\end{proof}

\begin{corollary}
Let $(\A,G,\alpha)$ be a discrete $C^*$-dynamical system, with $\A$ simple. Then the following are equivalent:
\begin{enumerate}
\item[i.] $\A \subseteq \A \rtimes_r G$ (or $\A \subseteq \A \rtimes G$) admits a unique pseudo-expectation;
\item[ii.] $\A \subseteq \A \rtimes_r G$ (or $\A \subseteq \A \rtimes G$) admits a unique conditional expectation;
\item[iii.] The action of $G$ on $\A$ is outer.
\end{enumerate}
\end{corollary}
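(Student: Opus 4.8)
The plan is to reduce the three group-level conditions to the single-automorphism equivalences already recorded in Remark \ref{simple}, exploiting the fact that each condition is really a conjunction of conditions on the individual automorphisms $\alpha_g$, $g \neq e$.

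First I would unwind the terminology. The hypothesis of Theorem \ref{pseudo} and Corollary \ref{pseudo_full} — that the action of $G$ on $\A$ is properly outer — means precisely that $\alpha_g$ is properly outer for every $g \neq e$. The hypothesis of Theorem \ref{CE} and Corollary \ref{CE_full} — that $G$ acts freely on $\A$ — means that for every $g \neq e$ the only $b \in \A$ satisfying $ba = \alpha_g(a)b$ for all $a \in \A$ is $b = 0$, i.e., that $\alpha_g$ is freely acting for every $g \neq e$ (this is exactly how ``acts freely'' is used in the proof of Theorem \ref{CE}). Finally, condition (iii) asserts that $\alpha_g$ is outer for every $g \neq e$.

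With this dictionary in place, the argument is then immediate. By Theorem \ref{pseudo} and Corollary \ref{pseudo_full}, condition (i) is equivalent to $\alpha_g$ being properly outer for all $g \neq e$; by Theorem \ref{CE} and Corollary \ref{CE_full}, condition (ii) is equivalent to $\alpha_g$ being freely acting for all $g \neq e$. Since $\A$ is simple, I would then apply Remark \ref{simple} to each individual $\alpha_g$, which shows that for a fixed $g \neq e$ the properties ``properly outer,'' ``freely acting,'' and ``outer'' coincide. Taking the conjunction over all $g \neq e$ yields the equivalence of the three group-level conditions, and hence (i) $\iff$ (ii) $\iff$ (iii), uniformly for both the reduced and the full crossed product.

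I do not expect any genuine obstacle here: the result is a direct specialization of the main theorems. The only point requiring a moment's care is checking that the group hypotheses of Theorems \ref{pseudo} and \ref{CE} do factor as the element-wise conditions to which Remark \ref{simple} applies — but since proper outerness and free action are each defined purely in terms of a single $\alpha_g$, no interaction between distinct group elements intervenes, and the reduction is clean bookkeeping.
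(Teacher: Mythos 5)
Your proposal is correct and matches the paper's (implicit) argument exactly: the corollary is stated without proof precisely because it follows by combining Theorem \ref{pseudo} and Corollary \ref{pseudo_full}, Theorem \ref{CE} and Corollary \ref{CE_full}, and the element-wise equivalences of Remark \ref{simple}. Your observation that all three group-level hypotheses factor as conditions on the individual $\alpha_g$ is the only point of substance, and you handle it correctly.
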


\subsection{Simplicity}

In this section, we use Theorem \ref{pseudo} and Corollary \ref{pseudo_full} to quickly reprove (and slightly strengthen) some known $C^*$-simplicity results for reduced crossed products.\\

In \cite[Thm. 3.1]{Kishimoto1981}, Kishimoto proves that if a discrete group $G$ acts outerly on a simple unital $C^*$-algebra $\A$ by $*$-automorphisms, then $\A \rtimes_r G$ is simple. It follows that $\A \rtimes_r H$ is simple for any subgroup $H \subseteq G$. Recently, Cameron and Smith obtained the beautiful result that every intermediate $C^*$-algebra $\A \subseteq \B \subseteq \A \rtimes_r G$ has this form \cite[Thm. 3.5]{CameronSmith2017}. Combining these statements gives:

\begin{theorem}[\cite{Kishimoto1981,CameronSmith2017}] \label{Kishimoto}
Let $G$ be a discrete group acting outerly on a simple unital $C^*$-algebra $\A$ by $*$-automorphisms. Then every intermediate $C^*$-algebra $\A \subseteq \B \subseteq \A \rtimes_r G$ is simple.
\end{theorem}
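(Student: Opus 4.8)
The plan is to prove Theorem \ref{Kishimoto} by combining the two cited ingredients with the pseudo-expectation machinery developed in this paper, using Theorem \ref{pseudo} and \cite[Thm. 3.5]{PittsZarikian2015} to control the structure of each intermediate algebra. First I would observe that since $G$ acts outerly on the simple unital $C^*$-algebra $\A$, Remark \ref{simple} tells us that the action is properly outer. By Theorem \ref{pseudo}, the inclusion $\A \subseteq \A \rtimes_r G$ therefore admits a \emph{unique} pseudo-expectation, which must be the canonical faithful conditional expectation $\bbE$. The key qualitative point is that this pseudo-expectation is faithful, so by \cite[Thm. 3.5]{PittsZarikian2015} the inclusion $\A \subseteq \A \rtimes_r G$ is \emph{hereditarily essential}: every intermediate $C^*$-algebra $\A \subseteq \B \subseteq \A \rtimes_r G$ gives an essential inclusion $\A \subseteq \B$.

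Next I would fix an arbitrary intermediate $C^*$-algebra $\A \subseteq \B \subseteq \A \rtimes_r G$ and invoke the Cameron--Smith theorem \cite[Thm. 3.5]{CameronSmith2017}, which identifies $\B$ as $\A \rtimes_r H$ for some subgroup $H \subseteq G$. Since any restriction of an outer action of $G$ on a simple algebra to a subgroup $H$ is again an outer action on $\A$, Kishimoto's theorem \cite[Thm. 3.1]{Kishimoto1981} applies and yields that $\B = \A \rtimes_r H$ is simple. This is the direct route, and it already gives the statement as written.

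The approach via hereditarily essential inclusions is what actually strengthens the result, and I would emphasize it as the main conceptual contribution. The point is that one need not appeal to Cameron--Smith to conclude that every intermediate algebra is \emph{simple}; rather, hereditary essentiality gives something sharper. If $\J \subseteq \B$ is any non-zero ideal, then essentiality of $\A \subseteq \B$ forces $\J \cap \A \neq 0$; but $\J \cap \A$ is an ideal of the simple algebra $\A$, so $\J \cap \A = \A$, whence $1 \in \J$ and $\J = \B$. Thus $\B$ is simple directly, with no need to first identify $\B$ as a crossed product. This is precisely the slight strengthening advertised in the introduction, and I would foreground it.

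The main obstacle is not any single computation but rather being careful about which hypotheses each cited result requires. The delicate point is verifying that the unique pseudo-expectation furnished by Theorem \ref{pseudo} is faithful, since \cite[Thm. 3.5]{PittsZarikian2015} requires faithfulness to conclude hereditary essentiality; here this is immediate because the unique pseudo-expectation coincides with $\bbE$, which is faithful by construction. Once faithfulness is in hand, the argument that simplicity of $\A$ plus essentiality forces simplicity of every intermediate $\B$ is a short ideal-intersection argument and presents no real difficulty. I would therefore structure the proof to make hereditary essentiality the engine, relegating the Cameron--Smith identification to a remark about the alternative (and historically original) route.
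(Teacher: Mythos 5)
Your proposal is correct, and the argument you foreground — proper outerness via Remark \ref{simple}, unique (faithful) pseudo-expectation via Theorem \ref{pseudo}, hereditary essentiality via \cite[Thm. 3.5]{PittsZarikian2015}, and then the ideal-intersection argument using simplicity of $\A$ — is exactly the proof the paper gives, which likewise bypasses Cameron--Smith. Your remark on the alternative Cameron--Smith/Kishimoto route matches the paper's surrounding discussion, so there is nothing to correct.
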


We present a quick proof which bypasses \cite{CameronSmith2017}.

\begin{proof}
By Remark \ref{simple}, the action of $G$ on $\A$ is properly outer, and so by Theorem \ref{pseudo} the inclusion $\A \subseteq \A \rtimes_r G$ has a unique pseudo-expectation, which is actually a faithful conditional expectation. By \cite[Thm. 3.5]{PittsZarikian2015}, the inclusion $\A \subseteq \A \rtimes _r G$ is \emph{hereditarily essential} (See the introduction for a reminder of what this means.) Now suppose $\A \subseteq \B \subseteq \A \rtimes_r G$ is an intermediate $C^*$-algebra and $\J \subseteq \B$ is a non-zero ideal. Then $\J \cap \A \neq 0$, which implies $\J \cap \A = \A$, which in turn implies $1 \in \A \subseteq \J$. Hence $\J = \B$, and $\B$ is simple.
\end{proof}

In \cite{ArchboldSpielberg1993}, Archbold and Spielberg generalize the notion of \emph{topological freeness} for automorphisms of abelian $C^*$-algebras (described in Remark \ref{abelian} above) to actions of discrete groups on general (non-abelian) $C^*$-algebras. Formally, topological freeness is stronger than proper outerness \cite[Prop. 1]{ArchboldSpielberg1993}, although they may be the same \cite[Rmk. 3.2]{KirchbergSierakowski2015}. One of their main results is the following:

\begin{theorem}[\cite{ArchboldSpielberg1993}, Thm. 1] \label{ArchboldSpielberg}
Let $G$ be a discrete group acting topologically freely on a unital $C^*$-algebra $\A$ by $*$-automorphisms. If $\J \subseteq \A \rtimes G$ is an ideal such that $\J \cap \A = 0$, then $\J \subseteq \ker(\lambda)$, where $\lambda:\A \rtimes G \to \A \rtimes_r G$ is the canonical $*$-homomorphism.
\end{theorem}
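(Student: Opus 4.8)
The plan is to deduce the statement from the unique pseudo-expectation property established in Corollary \ref{pseudo_full}. First I would invoke \cite[Prop. 1]{ArchboldSpielberg1993} to pass from topological freeness to proper outerness of the action of $G$ on $\A$. By Corollary \ref{pseudo_full}, the inclusion $\A \subseteq \A \rtimes G$ then admits a \emph{unique} pseudo-expectation. Since the canonical conditional expectation $\tilde{\bbE} = \bbE \circ \lambda:\A \rtimes G \to \A \subseteq I(\A)$ is itself a pseudo-expectation, uniqueness forces $\tilde{\bbE}$ to be \emph{the} pseudo-expectation for $\A \subseteq \A \rtimes G$.

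Next, given an ideal $\J \subseteq \A \rtimes G$ with $\J \cap \A = 0$, I would exploit the quotient. Let $q:\A \rtimes G \to (\A \rtimes G)/\J$ be the quotient $*$-homomorphism. The hypothesis $\J \cap \A = 0$ says precisely that $q|_{\A}$ is injective, hence an isometric $*$-isomorphism of $\A$ onto $q(\A)$. Composing its inverse with the inclusion $\A \subseteq I(\A)$ gives a ucp map $q(\A) \to I(\A)$, which by injectivity of $I(\A)$ extends to a ucp map $\psi:(\A \rtimes G)/\J \to I(\A)$. By construction $(\psi \circ q)|_{\A} = \id_{\A}$, so $\psi \circ q$ is a pseudo-expectation for $\A \subseteq \A \rtimes G$. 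By the uniqueness just noted, $\psi \circ q = \tilde{\bbE}$.

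To finish, I would feed elements of $\J$ through this identity and invoke faithfulness of $\bbE$. For $x \in \J$, the positive element $x^*x$ again lies in $\J$, so $q(x^*x) = 0$ and hence $\tilde{\bbE}(x^*x) = (\psi \circ q)(x^*x) = 0$. Since $\tilde{\bbE} = \bbE \circ \lambda$, this reads $\bbE(\lambda(x)^*\lambda(x)) = 0$; as $\bbE$ is faithful, $\lambda(x)^*\lambda(x) = 0$, whence $\lambda(x) = 0$. Thus $x \in \ker(\lambda)$ for every $x \in \J$, i.e. $\J \subseteq \ker(\lambda)$.

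The conceptual heart of the argument — and the step I expect to carry the most weight — is the construction of the auxiliary pseudo-expectation $\psi \circ q$, which converts the combinatorial-looking hypothesis $\J \cap \A = 0$ into something the uniqueness theorem can act on. Everything else (the passage from topological freeness to proper outerness, and the concluding faithfulness computation) is routine once this bridge is in place. A minor point worth checking carefully is that $q|_{\A}$ injective really does yield a complete order embedding $q(\A) \hookrightarrow I(\A)$, so that injectivity of $I(\A)$ applies; this is immediate, since $q|_{\A}$ is a $*$-isomorphism onto $q(\A)$ and is therefore completely isometric.
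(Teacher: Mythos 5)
Your proposal is correct and follows essentially the same route as the paper: both arguments build a pseudo-expectation that annihilates $\J$ (the paper via the map $\pi:\A+\J\to\A$, $a+h\mapsto a$, extended by injectivity; you via the quotient by $\J$ and the inverse of $q|_{\A}$, which is the same construction in different packaging), then identify it with $\bbE\circ\lambda$ using Corollary \ref{pseudo_full} and finish with faithfulness of $\bbE$. No gaps.
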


An immediate corollary is the following $C^*$-simplicity result:

\begin{corollary}[\cite{ArchboldSpielberg1993}] \label{ArchboldSpielbergCor}
Let $G$ be a discrete group acting topologically freely and minimally on a unital $C^*$-algebra $\A$ by $*$-automorphisms. Then $\A \rtimes_r G$ is simple.
\end{corollary}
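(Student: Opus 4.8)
The plan is to reduce simplicity to two ingredients: the \emph{intersection property} (every non-zero ideal of $\A \rtimes_r G$ meets $\A$ non-trivially) and minimality of the action.

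First I would establish the intersection property using the machinery already developed. By \cite[Prop. 1]{ArchboldSpielberg1993}, topological freeness implies that the action of $G$ on $\A$ is properly outer. Hence, by Theorem \ref{pseudo}, the inclusion $\A \subseteq \A \rtimes_r G$ admits a unique pseudo-expectation, which is the faithful conditional expectation $\bbE$. Invoking \cite[Thm. 3.5]{PittsZarikian2015}, the inclusion is then hereditarily essential; in particular it is essential, so every non-zero ideal $\J \subseteq \A \rtimes_r G$ satisfies $\J \cap \A \neq 0$. This is precisely the step that powered the proof of Theorem \ref{Kishimoto}. Alternatively, one could extract the intersection property directly from Theorem \ref{ArchboldSpielberg}, by pulling a non-zero ideal $\J \subseteq \A \rtimes_r G$ with $\J \cap \A = 0$ back along the surjection $\lambda$ to the ideal $\lambda^{-1}(\J) \subseteq \A \rtimes G$, which meets $\A$ trivially since $\lambda|_{\A} = \id_{\A}$; Theorem \ref{ArchboldSpielberg} then forces $\lambda^{-1}(\J) \subseteq \ker(\lambda)$, whence $\J = 0$, a contradiction.

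Next I would bring in minimality. Let $\J \subseteq \A \rtimes_r G$ be any non-zero ideal, so $\J \cap \A \neq 0$ by the previous step. The set $\J \cap \A$ is an ideal of $\A$, and it is $G$-invariant: for $a \in \J \cap \A$ and $g \in G$ we have $gag^{-1} = \alpha_g(a)$, which lies in $\J$ (as $\J$ is an ideal and $g, g^{-1} \in \A \rtimes_r G$) and in $\A$, hence in $\J \cap \A$. Since the action is minimal, $\A$ has no non-trivial $G$-invariant ideals, forcing $\J \cap \A = \A$. Therefore $1 \in \J$ and $\J = \A \rtimes_r G$, so $\A \rtimes_r G$ is simple.

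The substantive input is entirely contained in the intersection property; once that is in hand, the only things to verify are that $\J \cap \A$ is a $G$-invariant ideal and that minimality closes the argument, both of which are routine. If one insisted on a proof independent of \cite{PittsZarikian2015}, the main obstacle would shift to proving the intersection property by hand, which is exactly the content supplied by Theorem \ref{ArchboldSpielberg}.
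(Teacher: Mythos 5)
Your proof is correct and matches the paper's intended derivation: the paper states the corollary as an immediate consequence of Theorem \ref{ArchboldSpielberg}, and your second route (pull a non-zero ideal back along $\lambda$ to get the intersection property, then use minimality of the $G$-invariant ideal $\J \cap \A$ to conclude $1 \in \J$) is exactly that argument. Your first route, via topological freeness $\Rightarrow$ proper outerness $\Rightarrow$ unique pseudo-expectation $\Rightarrow$ hereditarily essential, is also valid and is precisely the mechanism by which the paper later notes the corollary can be strengthened to properly outer actions.
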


(We say that $G$ acts \emph{minimally} on $\A$ if there are no non-trivial $G$-invariant ideals.)\\

We can economically prove Theorem \ref{ArchboldSpielberg} under the (apparently) weaker hypothesis that the action of $G$ on $\A$ is properly outer, thereby (seemingly) strengthening the result. Corollary \ref{ArchboldSpielbergCor} can be correspondingly strengthened.

\begin{proof}[Proof of strengthened Theorem \ref{ArchboldSpielberg}.]
Define a unital $*$-homomorphism $\pi:\A + \J \to \A: a + h \mapsto a$. By injectivity, $\pi$ extends to a pseudo-expectation $\theta:\A \rtimes G \to I(\A)$ for $\A \subseteq \A \rtimes G$. By Corollary \ref{pseudo_full} (which only requires proper outerness of the action of $G$ on $\A$), $\theta = \bbE \circ \lambda$. Thus
\[
    h \in \J
    \implies \bbE(\lambda(h)^*\lambda(h)) = \bbE(\lambda(h^*h)) = \theta(h^*h) = \pi(h^*h) = 0
    \implies \lambda(h) = 0.
\]
Hence $\J \subseteq \ker(\lambda)$.
\end{proof}

This inevitably leads us to wonder:

\begin{question}
Could it be that topological freeness and proper outerness are actually equivalent?
\end{question}

\subsection{Unique Conditional Expectation but Multiple Pseudo-Expectations}

In \cite[Ex. 4.4]{Zarikian2017}, we produce a $C^*$-inclusion $\A \subseteq \B$ with a unique conditional expectation, but infinitely many pseudo-expectations. In fact, $\B$ is abelian in our example. Unfortunately, the construction is a bit ad hoc. Also, the conditional expectation is not faithful. Now we can produce many such examples systematically. Indeed, if $\A$ is a unital $C^*$-algebra and $G$ is a discrete group acting freely but not properly outerly on $\A$ by $*$-automorphisms, then the $C^*$-inclusion $\A \subseteq \A \rtimes_r G$ admits a unique (faithful) conditional expectation, but infinitely many pseudo-expectations.

\end{document}